\journal{Journal of \LaTeX\ Templates}
\newcommand{\I}{\one}
\newcommand{\one}{\mathds 1}
\newcommand{\eps}{\varepsilon}
\renewcommand{\O}{\mathcal O}
\newcommand{\bO}{\partial\mathcal O}
\renewcommand{\L}{\mathcal L}
\def\R{\mathbb R}
\def\E{\mathbb E}
\def\P{\mathbb P}
\def\ds{\displaystyle}
\def\dd{\displaystyle}
\newtheorem{Theorem}{Theorem} [section]
\newtheorem{Proposition}[Theorem]{Proposition}
\newtheorem{Remark}[Theorem]{Remark}
\begin{document}

\begin{frontmatter}

\title{ Exact controllability  of stochastic differential equations with multiplicative noise}

\author{V. Barbu}
\address{Al.I. Cuza University and Romanian Academy,  Iasi, Romania}
\ead{vb41@uaic.ro}

\author{L. Tubaro}
\address{University of Trento, Italy}
\ead{luciano.tubaro@unitn.it}

\begin{abstract}
One proves that the $n$-D stochastic controlled equation $dX+A(t)Xdt=\sigma(X)dW+B(t)u\,dt$,
where $\sigma\in\mbox{Lip}((\R^n,\L(\R^d,\R^n))$,  $A(t)\in\L(\R^n)$ and $B(t)\in\L(\R^n,\R^n)$
is invertible, is exactly controllable with high probability in each $y\in\R^n$, $\sigma(y)=0$ on each finite interval $(0,T)$.
An application to approximate controllability to stochastic heat equation is given. The case where
$B\in\L(\R^m,\R^n)$, $1\le m <n$ and the pair $(A,B)$ satisfies the Kalman rank condition is also studied.
\end{abstract}

\begin{keyword}
stochastic equation\sep controllability\sep feedback controller
\MSC[2010] 60H10\sep  60H15 \sep 93B05\sep 93B52
\end{keyword}

\end{frontmatter}


\section{Introduction}
Consider the stochastic $n$-D differential equation
\begin{equation}\label{e1.1}
\begin{array}{l}
dX+A(t)\,X\,dt=\sigma(X)\,dW+B(t)u\,dt,\quad t\ge0\\
X(0)=x\in\R^n,
\end{array}
\end{equation}
where $\sigma\colon \R^n\to \L(\R^d,\R^n)$;   $A(t)\in\L(\R^n)$, $B(t)\in  \L(\R^m,\R^n)$,
$t\in [0,T]$, are assumed to satisfy
the following hypotheses
\begin{enumerate}[(i) ]
\item $y\in\R^n$, $\sigma\in \mbox{Lip}(\R^n,\L(\R^d,\R^n)),\quad \sigma(y)=0.$
\item $A,\,B\in C(\R^+;\L(\R^n,\R^n))$ and for some $\gamma>0$
\begin{equation}\label{e1.4}
B(t)\,B^*(t)\ge \gamma^2\,I,\ \ \forall t\in[0,\infty).
\end{equation}
\item $\sigma(X)\,dW(t)=\sum_{j=1}^d\sigma_{\cdot j}(X)\,d\beta_j(t)$,
	$t\ge 0$ where $\{\beta_j\}_{j=1}^d$ is a system of independent Brownian motions in the probability space
	$\{\Omega,\mathcal F,\P\}$.
\end{enumerate}
We denote by $(\mathcal F_t)_{t\ge0}$ the filtration corresponding to $\{\beta_j\}_{j=1}^d$ and by $X^u$ the solution to \eqref{e1.1}.

The problem we address here is the following
\paragraph{Problem 1} Given $x,y\in \R^n$ find an $(\mathcal F_t)_{t\ge0}$-adapted controller
$u\in L^2((0,T)\times\Omega;\R^m)$ such that
\begin{equation}\label{e1.3}
X^u(0)=x,\ \ X^u(T)=y.
\end{equation}

The main result of this work, Theorem 2.1 below, amounts to saying that, under
hypotheses (i)--(iii), Pro\-blem 1 has a solution $u^\ast$ in a sense to be made precised
later on and morover the controller $u^\ast$ can be found in a feedback form $u^\ast=\Phi^\ast(X)$.

As regards the literature on exact controllability of equation \eqref{e1.1} the works
\cite{EK}--\cite{WYYY} should be primarily cited. In par\-ti\-cular, in the recent work \cite{WYYY}
it is solved the above \mbox{exact} controllability problem in the special case where $\sigma$ is linear
and $B\equiv B(t)$ satisfies the condition \eqref{e1.4}.

With respect to above mentioned papers the main novelty of this work is the 
exact controllability of equation 
via a new controllability approach to \eqref{e1.1} by designing a feedback controller $u^\ast$ of relay type
which steers with high probability $x$ in $y$ in the time $T$. This constructive approach allowed to solve the controllability problem
for control systems \eqref{e1.1} with Lipschitzian volatility term $\sigma$.

\section{The main result}
\begin{Theorem}\label{t2.1}
	Assume that hypotheses {\rm(i)--(iii)} hold. Let $x,y\in \R^n$ and $T>0$ be arbitrary
	but fixed. Then, for each $\rho>0$, there is an $(\mathcal F_t)_{t\ge0}$-adapted
	controller $u^\ast\in L^\infty((0,T)\times\Omega;\R^m)$ such that if
	\begin{equation}\label{e2.1}
	\tau=\inf\{t\ge 0: |X^{u^\ast}(t)-y|=0 \},
	\end{equation}
	we have
	\begin{equation}\label{e2.2}
	\P(\tau\le T)\ge 1-(\rho\eta)^{-1}\,(|y|+(1-e^{-C^\ast\,T})^{-1}\,|x-y|)
	\end{equation}
	for some $\eta, C^\ast>0$ independent of $\rho, \;x$ and $y$. Moreover, the controller $u^\ast$ is
	expressed in the feedback form
	\begin{equation}\label{e2.3}
	u^\ast(t)\in -\rho\,\mbox{\em sign}(B^\ast(t)(X(t)-y)),\quad t\in(0,T).
	\end{equation}
\end{Theorem}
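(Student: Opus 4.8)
The plan is to analyze the closed-loop system obtained by inserting the relay feedback (2.3), writing $Z=X^{u^\ast}-y$ and treating $|Z(t)|$ as a Lyapunov functional whose drift is made strictly negative near the target by the control term $B u^\ast$. With $u^\ast\in-\rho\,\mbox{sign}(B^\ast Z)$, where $\mbox{sign}$ denotes the subdifferential of the Euclidean norm, the equation for $Z$ reads $dZ=-A(t)(Z+y)\,dt-\rho\,B(t)\,\mbox{sign}(B^\ast(t)Z)\,dt+\sigma(Z+y)\,dW$. Three structural facts drive the argument. First, by hypothesis (i), $\sigma(y)=0$ and $\sigma$ Lipschitz with constant $L$ give $|\sigma(Z+y)|\le L|Z|$, so the diffusion degenerates at the target. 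Second, \eqref{e1.4} yields $|B^\ast(t)Z|\ge\gamma|Z|$, whence $\langle B^\ast(t)Z,\mbox{sign}(B^\ast(t)Z)\rangle=|B^\ast(t)Z|\ge\gamma|Z|$. Third, $A,B\in C$ are bounded on $[0,T]$.

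Before any estimate I must give meaning to the closed-loop equation, since the relay drift is discontinuous and set-valued at $Z=0$: this is the sense to be made precise announced after Problem~1. I would regularize $\mbox{sign}$ by its Yosida (or a smooth $\eps$-) approximation $s_\eps$, solve the resulting Lipschitz SDE for $Z_\eps$, derive the estimates below uniformly in $\eps$, and pass to the limit $\eps\to0$ to obtain an $(\mathcal F_t)$-adapted solution $X^{u^\ast}$ together with a selection $u^\ast\in L^\infty$ satisfying \eqref{e2.3}. Maximal monotonicity of $\mbox{sign}$ makes this passage robust.

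The core estimate comes from Itô's formula applied to $|Z|$, regularized as $\sqrt{|Z|^2+\eps^2}$ to avoid the singularity at the origin. The drift from the control is $-\rho\,\langle B^\ast Z,\mbox{sign}(B^\ast Z)\rangle/|Z|\le-\rho\gamma$; the term $-\langle Z,A(Z+y)\rangle/|Z|$ is bounded by $\|A\|\,|Z|+\|A\|\,|y|$; and the second-order Itô term is nonnegative but, because $|\sigma(Z+y)|\le L|Z|$, controlled by $C|Z|$ and vanishing as $Z\to0$. Collecting the coefficients of $|Z|$ into a constant $C^\ast$ gives, up to a local martingale $M$, the inequality $d|Z|\le(-\rho\gamma+\|A\|\,|y|+C^\ast|Z|)\,dt+dM_t$. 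To extract \eqref{e2.2} I remove the growth term by the integrating factor $e^{-C^\ast t}$: the process $e^{-C^\ast(t\wedge\tau)}|Z(t\wedge\tau)|+(\rho\gamma-\|A\|\,|y|)\int_0^{t\wedge\tau}e^{-C^\ast s}\,ds$ is a supermartingale, with $\tau$ the hitting time \eqref{e2.1}. Optional stopping at $t=T$, together with $|Z|\ge0$, $Z(0)=x-y$, and $\int_0^{T\wedge\tau}e^{-C^\ast s}\,ds\ge(C^\ast)^{-1}(1-e^{-C^\ast T})\,\one_{\{\tau>T\}}$, yields $\rho\gamma\,(C^\ast)^{-1}(1-e^{-C^\ast T})\,\P(\tau>T)\le|x-y|+\|A\|\,|y|\,(C^\ast)^{-1}(1-e^{-C^\ast T})$. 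Rearranging and absorbing $\gamma,\|A\|,C^\ast$ into a single constant $\eta>0$ independent of $\rho,x,y$ gives $\P(\tau>T)\le(\rho\eta)^{-1}(|y|+(1-e^{-C^\ast T})^{-1}|x-y|)$, which is exactly \eqref{e2.2}.

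I expect the main obstacle to be the analytic content of the first two steps: rigorously defining the solution of the discontinuous feedback system and justifying Itô's formula for $|Z|$ uniformly in the regularization near $Z=0$. The delicate point is that both the diffusion coefficient and its Itô correction must vanish at the right rate at the target (thanks to $\sigma(y)=0$) so that the negative control drift $-\rho\gamma$ is not overwhelmed as $Z\to0$; controlling the local martingale's contribution and the $\eps\to0$ passage while preserving the hitting-probability bound is where the real work lies.
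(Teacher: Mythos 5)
Your proposal is correct and follows essentially the same route as the paper: well-posedness of the relay closed-loop system via Yosida approximation and maximal monotonicity (the paper's Proposition \ref{p3.1}), then It\^o's formula applied to a smooth regularization of $|X-y|$, with the drift bound $-\rho\gamma$ coming from \eqref{e1.4}, the It\^o correction controlled because $\sigma(y)=0$, and the integrating factor $e^{-C^\ast t}$ producing a supermartingale whose expectation at $T$ gives \eqref{e2.2}. The only cosmetic differences are your choice of regularization $\sqrt{|Z|^2+\eps^2}$ versus the paper's $\varphi_\eps(|X-y|)$, and your use of optional stopping at $T\wedge\tau$ where the paper integrates $\one_{|X(r)-y|\ne 0}$ directly.
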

Here $\mbox{sign}\colon \R^n\to \R^n$ is the multivalued mapping
\begin{equation}\label{e2.4}
\mbox{sign}\,y=
\begin{cases}
\frac{y}{|y|}& \mbox{if }y\not=0\\
\{\theta\in\R^n : |\theta|\le 1\} & \mbox{if }y=0.
\end{cases}
\end{equation}
In a few words the idea of the proof is to show that the corresponding closed loop stochastic system
\begin{equation}\label{e2.5}
\begin{array}{l}
\begin{aligned}
dX(t)+A(t)\,X(t)\,dt+\rho\,B(t)\,\mbox{sign}(B^\ast(t)(X(t)-y))\,dt\ni\\ \sigma(X)\,dW,
\end{aligned}\\
X(0)=x
\end{array}
\end{equation}
is well posed that is, it has a unique absolutely continuous solution, and that if $\tau$ is the stopping time defined by \eqref{e2.1} then \eqref{e2.2} holds.
By \eqref{e2.3}-\eqref{e2.4} we see that $u^\ast$ is a relay controller given by
\[
\begin{cases}
u^\ast(t)\!=\!-\rho\,U(X(t))\,|U(X(t))|&\!\mbox{on }\{(t,\omega)\mid U(X(t))\not=0\}\\
|u^\ast(t)|\le \rho&\!\mbox{on }\{(t,\omega)\mid U(X(t))=0\}
\end{cases}
\]
where $U(X(t))= B^\ast(t)(X(t)-y)$.
Though $u^\ast$ is not explicitely defined on $G=\{(t,\omega)\mid U(X(t))=0\}$, it is however an $\mathcal F_t$-adapted
controller multivalued process which is uniquely defined on $G^{\mbox{\it c}}$, i.e. the complement of $G$.

\bigskip
Theorem \ref{t2.1} amounts to saying that under assumptions
(i)--(iii), system \eqref{e1.1} is exactly controllable to each $y\in\sigma^{-1}(0)$
with high probability for $\rho$ large enough. In particular one has exact null controllability if $\sigma(0)=0$.

We shall denote by the same symbol $|\cdot|$ the norm in the Euclidean spaces $\R^n$ and
$\L(\R^n,\R^m)\; = \;\R^{nm}$. For $n=m$ we simply write $\L(\R^n,\R^n)=\L(\R^n)$.

\section{Proof of  Theorem \ref{t2.1}}
We have
\begin{Proposition}\label{p3.1}
	Let $0<T<\infty$. There is a unique strong solution $X\in L^2(\Omega;C([0,T];\mathcal L(\R^{n})))$
	to \eqref{e2.5}. More precisely, there are $X\in L^2(\Omega;C([0,T];\mathcal L(\R^{n})))$ and an
	$(\mathcal F_t\}_{t\ge0}$-adapted process $\xi\colon [0,T]\to \mathcal L(\R^{n})$ such that
	$\xi\in L^\infty((0,T)\times\Omega;\mathcal L(\R^{n}))$ and
\begin{eqnarray}\label{e3.6}
&\hspace*{-10mm}\xi(t)\!\in\!  B(t)(\mbox{\em sign} \,(B^\ast(t)(X(t)-y))),\mbox{ a.e. in }(0,T)\!\times\!\Omega
	\\[2mm]
	\label{e3.7}
&\begin{array}{l}
	dX(t)+A(t)\,X(t)\,dt+\rho\,\xi(t)\,dt=\sigma(X)\,dW\\
	X(0)=x
\end{array}
\end{eqnarray}
\end{Proposition}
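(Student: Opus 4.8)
The plan is to read the multivalued drift as the subdifferential of a convex function and to reduce \eqref{e3.7} to a stochastic variational inequality governed by a maximal monotone operator, which I would treat by a Yosida-regularisation scheme. The key observation is that $B(t)\,\mbox{sign}(B^\ast(t)(x-y))=\partial_x\varphi_t(x)$ with $\varphi_t(x)=|B^\ast(t)(x-y)|$ convex, so $x\mapsto B(t)\,\mbox{sign}(B^\ast(t)(x-y))$ is maximal monotone in $\R^n$. For $\lambda>0$ I would replace $\mbox{sign}$ by its Yosida approximation $\mbox{sign}_\lambda(z)=z/\max(\lambda,|z|)$, which is Lipschitz and satisfies $|\mbox{sign}_\lambda(z)|\le 1$ and $\langle z,\mbox{sign}_\lambda(z)\rangle\ge 0$. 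Since $\sigma$ is Lipschitz and $A,B$ are continuous, hence bounded on $[0,T]$, the regularised equation
\[
dX_\lambda + A(t)X_\lambda\,dt + \rho\,B(t)\,\mbox{sign}_\lambda(B^\ast(t)(X_\lambda-y))\,dt = \sigma(X_\lambda)\,dW,\quad X_\lambda(0)=x,
\]
has globally Lipschitz coefficients, and standard stochastic theory yields a unique strong solution $X_\lambda\in L^2(\Omega;C([0,T];\R^n))$.

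Next I would derive bounds uniform in $\lambda$. Setting $\xi_\lambda(t)=B(t)\,\mbox{sign}_\lambda(B^\ast(t)(X_\lambda-y))$ and $w_\lambda=B^\ast(X_\lambda-y)$, the pointwise bound $|\mbox{sign}_\lambda|\le 1$ gives immediately $|\xi_\lambda(t)|\le|B(t)|\le\|B\|_{C([0,T])}$, a uniform $L^\infty((0,T)\times\Omega;\R^n)$ bound which is precisely the source of the regularity asserted for $\xi$. Applying It\^o's formula to $|X_\lambda-y|^2$ and using $\langle X_\lambda-y,\xi_\lambda\rangle=\langle w_\lambda,\mbox{sign}_\lambda(w_\lambda)\rangle\ge 0$ together with $|\sigma(X_\lambda)|=|\sigma(X_\lambda)-\sigma(y)|\le L|X_\lambda-y|$, the drift and diffusion contributions are dominated by $C(1+|X_\lambda-y|^2)$; Gronwall's inequality and the Burkholder--Davis--Gundy inequality then yield a bound on $X_\lambda$ in $L^2(\Omega;C([0,T];\R^n))$ independent of $\lambda$.

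To pass to the limit I would show that $\{X_\lambda\}$ is Cauchy. Writing the equation for $Z=X_\lambda-X_\mu$ and applying It\^o to $|Z|^2$, the only delicate term is $-2\rho\langle Z,\xi_\lambda-\xi_\mu\rangle$. Since $B^\ast Z=w_\lambda-w_\mu$, the standard resolvent estimate for Yosida approximations gives $\langle w_\lambda-w_\mu,\mbox{sign}_\lambda(w_\lambda)-\mbox{sign}_\mu(w_\mu)\rangle\ge -(\lambda+\mu)$ because $|\mbox{sign}_\lambda|\le 1$, so this term is bounded by $C\rho(\lambda+\mu)$. As $Z(0)=0$, Gronwall yields $\E\sup_{t\le T}|X_\lambda-X_\mu|^2\le C\rho(\lambda+\mu)\to 0$, whence $X_\lambda\to X$ strongly in $L^2(\Omega;C([0,T];\R^n))$ and $\xi_\lambda\rightharpoonup^\ast\xi$ in $L^\infty$. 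The main obstacle is the identification $\xi(t)\in B(t)\,\mbox{sign}(B^\ast(t)(X(t)-y))$, where the monotone structure is essential: since $w_\lambda\to B^\ast(X-y)$ strongly while $|w_\lambda-J_\lambda w_\lambda|=\lambda|\mbox{sign}_\lambda(w_\lambda)|\le\lambda\to 0$, the demiclosedness (strong--weak closedness) of the graph of the maximal monotone operator $\mbox{sign}$, realised on $L^2((0,T)\times\Omega)$, permits passage to the limit in $\mbox{sign}_\lambda(w_\lambda)\in\mbox{sign}(J_\lambda w_\lambda)$ and delivers \eqref{e3.6}; passing to the limit in the regularised equation then gives \eqref{e3.7}.

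Finally, uniqueness follows directly from monotonicity: for two solutions $(X_1,\xi_1),(X_2,\xi_2)$, It\^o applied to $|X_1-X_2|^2$ produces the term $-2\rho\langle X_1-X_2,\xi_1-\xi_2\rangle=-2\rho\langle B^\ast(X_1-X_2),\mbox{sign}(B^\ast(X_1-y))-\mbox{sign}(B^\ast(X_2-y))\rangle\le 0$, so that after taking expectation and invoking the Lipschitz bound on $\sigma$, Gronwall with $X_1(0)=X_2(0)$ forces $X_1\equiv X_2$.
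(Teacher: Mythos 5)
Your proposal is correct and follows essentially the same route as the paper: Yosida regularisation of the maximal monotone drift, uniform $L^\infty$ and $L^2(\Omega;C([0,T]))$ bounds, a Cauchy estimate of order $\lambda+\mu$ from the standard two-parameter resolvent inequality, identification of the weak$^*$ limit $\xi$ via the strong--weak closedness of the monotone graph, and uniqueness by monotonicity. The only (immaterial) difference is that you regularise $\mbox{sign}$ before composing with $B$ and $B^\ast$, whereas the paper takes the Yosida approximation of the composite operator $F(t,X)=\rho\,B(t)\,\mbox{sign}(B^\ast(t)(X-y))$ directly.
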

We shall prove Proposition \ref{p3.1} at the end of this section and now we use it to prove  Theorem \ref{t2.1}.
The proof is based on some extinction type arguments already deve\-loped in a different context in \cite{BBT} and \cite[pag. 68]{BDR}.
(In the following we shall write $A$ instead of $A(t)$.)

\bigskip
Let $\varphi_\eps\in C^2(\R^+)$ be such that $\varphi_\eps(r)=\frac{r}{\eps}$
for $0\le r\le \eps$, $\varphi_\eps'(r)=1+\eps$ for $r\ge 2\eps$ and
$|\varphi_\eps''(r)\le\frac{C}{\eps}$, $\forall r\in\R^+$. We set $\Phi_\eps(X)=\varphi_\eps(|X|)$, $\forall X\in\R^n$.
We have
$\nabla\Phi_\eps(X)=\varphi_\eps'(|X|)\,\mbox{sign}\,X$, $\nabla^2\Phi_\eps(X)=0$
for $|X|\le\eps$ and $|X|\ge2\eps$, $|\nabla^2\Phi_\eps(X)|\le \frac{C}{\eps}$.
We apply It\^o's formula in \eqref{e3.7} to function $t\to \Phi_\eps(X(t)-y)$.
We get
\[
\begin{array}{l}
d\Phi_\eps(X(t)-y)\\+\langle A(X(t))-A(y),\nabla\Phi_\eps(X(t)-y)\rangle\,dt\\
+\rho\,\langle\xi(t),B^\ast(t)\nabla\Phi_\eps(X(t)-y)\rangle\,dt=\\
-\langle A(y),\nabla\Phi_\eps(X(t)-y)\rangle\,dt\\+\tfrac12\sum_{i,j=1}^n\alpha_{ij}(\nabla^2\Phi_\eps(X(t)-y))_{ij}\,dt\\
+\langle \sigma(X(t))\,dW,\nabla\Phi_\eps(X(t)-y)\rangle
\end{array}
\]
where $\alpha_{ij}=\sum_{\ell=1}^d\sigma_{i\ell}\sigma_{\ell j}$. We note that for $\eps\to0$
$\Phi_\eps(X(t)-y)\to |X(t)-y|$, $\nabla\Phi_\eps(X(t)-y)\to \eta(t)\in\mbox{sign}\,(X(t)-y)$,
$|\nabla\Phi_\eps(X(t)-y)|\le 1+\eps$, and because $\sigma(y)=0$ we have also
$|\alpha_{ij}(t)(\nabla^2\Phi_\eps(X(t)-y))_{ij}|\le C_2^\ast\,\eps$ for all $t\ge0$.

\bigskip
On the other hand, by \eqref{e1.4} it follows that there is $\gamma>0$ such that
\begin{equation}\label{e3.8}
|B^\ast(t)(X(t)-y)|\ge\gamma\,|X(t)-y|
\end{equation}
We note also that 
$$|\alpha_{ij}(t)|\le  C_2\,|X(t)-y|^2.$$ 
Integrating on $(s,t)\subset(0,\infty)$ we get
\[
\begin{array}{l}
\ds\Phi_\eps(X(t)-y)+\rho\int_s^t\langle\xi(r),B^\ast(t)\nabla\Phi_\eps(X(r)-y)\rangle\,dr\le\\\ds
\Phi_\eps(X(s)-y)+\|A\|(1+\eps)\int_s^t(|y|+|X(r)-y|)dr\\\ds+C_2^\ast\eps+\int_s^t\langle \sigma(X(r))\,dW,\nabla\Phi_\eps(X(r)-y)\rangle.
\end{array}
\]
Taking into account that $$B^\ast(t)\,\nabla \Phi_\eps(X(r)-y)\to B^\ast(t)\eta(r),$$
with $\eta(r)\in\mbox{sign}\,(X(r)-y)$ and that
\[
\langle \xi(r),B^\ast(t)\eta(r)\rangle=|B^\ast(t)(X(r)-y)|\,|X(r)-y|^{-1}\I_{|X(r)-y|\not=0},
\]
by \eqref{e3.8} we get for $\eps\to0$
\[
\begin{aligned}
&|X(t)-y|+\rho\,\gamma\int_s^t\one_{|X(r)-y|\not=0}\,dr\\
\le\,&|X(s)-y|
+C^\ast\int_s^t |X(r)-y|\,dr+C^\ast(t-s)\,|y|
 \\ &\hspace{2cm}\!\!+\int_s^t\langle \sigma(X(r))dW_r,\mbox{sign}(X(r)-y))\rangle
\end{aligned}
\]
where $C^\ast$ is independent of $x$, $y$ and $\rho$.
Hence
\begin{eqnarray}
&&\hspace*{-15mm}
e^{-C^\ast t}|X(t)-y|+ \rho\,\gamma\dd\int_s^te^{-C^\ast r}\one_{|X(r)-y|\not=0}\,dr\nonumber\\
&&\hspace*{-15mm}\le e^{-C^\ast s}|X(s)-y|+(1-e^{C^\ast(s-t)})|y| \label{e3.9}\\
&&\hspace*{-15mm} +\dd\int_s^te^{-C^\ast r} \langle \sigma(X(r))dW_r,\mbox{sign}(X(r)-y))\rangle, \ 0< s\le t<\infty\nonumber
\end{eqnarray}
In particular, \eqref{e3.9} implies that the process
$$t\to e^{-C^\ast t}|X(t)-y|$$ is a $(\mathcal F_t)_{t\ge0}$-supermartingale
that is,
\[
\E(e^{-C^\ast\,t}|X(t)-y|\mid \mathcal F_s)\le e^{-C^\ast\,s}|X(s)-y|,\ \ \forall t\ge s.
\]
This yields
$
|X(t)-y|=0,\ \forall t\ge \tau,
$
where $\tau$ is defined by \eqref{e2.1}.

If take expectation $\E$ in \eqref{e3.9}, we obtain, for $s=0$,
\[\begin{array}{c}
e^{-C^\ast t}\E|X(t)-y|+\rho\,\gamma\dd\int_0^te^{-C^\ast r}\P(\tau>r)\,dr\\
\le
|x-y|+(1-e^{-C^\ast t)})|y|.\end{array}
\]
Hence, for $t=T$ we get
\begin{equation}\label{e3.10}
\begin{array}{l}
\P(\tau>T)\\
\dd\le \frac{C^\ast}{\rho\,\gamma}\,
\big((1-e^{-C^\ast T})^{-1}|x-y|+|y|\big)\end{array}
\end{equation}

\bigskip\noindent

\textit{Proof of Proposition {\rm\ref{p3.1}}.}

Let $F_\lambda(t)\in C([0,T];\R^n)$ 
be~the Yosida approximation of $F(t,X)=\rho\,B(t)\mbox{sign}(B^\ast(t)( X-y))$,
 that is (see \cite[pag. 97]{B})
\begin{equation}\label{e3.12}
F_\lambda(t)=\tfrac1\lambda\,(I-(I+\lambda\,F(t))^{-1}),\quad \lambda>0
\end{equation}
We note that the operator $F(t)$ is $m$-accretive in  the space $\R^n\times\R^n$.
Since the $F_\lambda(t)$ are Lipschitz for $t\in[0,T]$, the equation
\begin{equation}
\begin{array}{l}
dX_\lambda+A(t)\,X_\lambda\,dt+F_\lambda(t,X_\lambda)\,dt=\sigma(X_\lambda)\,dW\\
X_\lambda(0)=x
\end{array}
\end{equation}
has for each $T>0$ a unique solution $$X_\lambda\in L^2(\Omega;C([0,T];\R^n).$$
Taking into account  that for each $\lambda>0$
\begin{eqnarray}\label{e3.14}
& F_\lambda(t,X)\in F(t,I+\lambda F(t))^{-1}X),\  \forall X\in \R^n
\\[1mm]
&|F_\lambda(t,X)| \le C\,\rho,\  \forall X\in \R^n,\;\lambda>0\label{e3.15}
\end{eqnarray}
and that $X\to F(t,X)$ is monotone in $\R^n$,
we get, via the Burkholder-Gundy-Davis inequality, the estimate
\[
\E\sup_{t\in[0,T]} |X_\lambda(t)|_{\mathcal L(\R^n)}^2\le C,\  \forall \lambda>0
\]
\[
\begin{array}{l}
\dd\E\sup_{t\in[0,T]} |X_\lambda(t)-X_\mu(t)|\\\qquad
\le C\,\E\dd\int_0^t (\lambda\,|F_\lambda(r,X_\lambda(r))|^2+\mu\,|F_\mu(r,X_\mu(r))|^2)\,dr\\\qquad
\le C\,(\lambda+\mu),\ \ \forall \lambda,\mu>0.
\end{array}
\]
Hence, there is
\begin{equation}\label{e3.16}
X=\lim_{\lambda\to0} X_\lambda\ \ \mbox{ in }\
L^2(\Omega;C([0,T];\R^n))
\end{equation}
and by \eqref{e3.14}, \eqref{e3.15} there is also (on a subsequence)
\begin{equation}\label{e3.17}
\xi=\mbox{w$^*$-}\lim_{\lambda\to0}F_\lambda(t,X_\lambda)
\ \ \mbox{ in }\  L^\infty((0,T)\times\Omega;\R^n)
\end{equation}
Since by \eqref{e3.12} and \eqref{e3.16}
\[
(I+\lambda F(t))^{-1}X_\lambda(t)\to X(t)\  \ \mbox{ in }\
L^2(\Omega;C([0,T]; \R^n)),
\]
for $\lambda\to 0$, it follows by \eqref{e3.14}, \eqref{e3.15} and the maximal monotonicity
of $F(t)\colon  \R^n\to R^n$
that
\[
\xi(t)\in F(t,X(t)),\ \ \mbox{a.e. in }(0,T)\times\Omega.
\]
Hence $X$ is a solution to \eqref{e3.6}-\eqref{e3.7} as claimed. The uniqueness
is immediate by monotonicity of the mapping $F(t)$ but we omit the details.

\section{ The case of linear multiplicative noise}
Consider here the equation
\begin{equation}\label{e55.1}
\begin{array}{l}
dX+A(t)\,X\,dt=\sum_{i=1}^d\sigma_i(X)\,d\beta_i+B(t)\,u(t)\,dt\\
X(0)=x
\end{array}
\end{equation}
with the final target $X(T)=y$,
where $B(t)$ satisfies assumption (ii) and $\sigma_i\in\L(\R^n)$.

Let $\Gamma\in C([0,T];\L(\R^n))$ be the solution to equation
\begin{equation}
d\Gamma(t)=\sum_{i=1}^d \sigma_i\Gamma(t)\,d\beta_i,\quad t\ge0,\ \ \Gamma(0)=I.
\end{equation}
By the substitution $X(t)=\Gamma(t)y(t)$ one transforms via It\^o's formula equation \eqref{e55.1}
into random differential equation
\begin{equation}\label{e55.4}
\frac{dy}{dt}(t)+\Gamma^{-1}(t)A(t)\Gamma(t)\,y(t)=\Gamma^{-1}(t)B(t)u(t).
\end{equation}
In \eqref{e55.4} we take $u$ the feedback controller
\begin{equation}\label{e55.5}
u(t)=-\tilde\rho\,\mbox{sign}\,\big((B(t)\Gamma^{-1}(t))^\ast(y(t)-y_T)\big),\ \ t\ge0
\end{equation}
where $y_T=\Gamma^{-1}(T)X_T$. Arguing as in the proof of Proposition \ref{p3.1}
it follows that \eqref{e55.4} has (for each $\omega\in\Omega$) unique absolutely continuous solution $y$
with $\frac{dy}{dt}\in L^2(0,T;\R^n)$. 
We note that if $y$ is an $(\mathcal F_t)_{t\ge0}$-adapted solution to \eqref{e55.4}-\eqref{e55.5} then $X=\Gamma(t)y(t)$
is the solution to closed loop system \eqref{e55.1} with feedback control
\begin{equation}\label{e55.5a}
\begin{split}
&u(t)=\\&-\tilde\rho\,\mbox{sign}\,\big((B(t)\Gamma^{-1}(t))^\ast\,\Gamma^{-1}(t)(X(t)-\Gamma(t)\Gamma^{-1}(T)X_T)\big).
\end{split}
\end{equation}

We have
\begin{Theorem}\label{t5.1}
Let $T>0$, $x\in\R^n$ and $X_T\in\mathcal F_T\cap L^2(\Omega)$
be arbitrary but fixed. Then there is $\tilde\rho\in\mathcal F_T\cap L^2(\Omega)$ such that
the feedback controller \eqref{e55.5}
steers $x$ in $y_T$, in time $T$, with probability one.
\end{Theorem}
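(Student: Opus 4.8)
The plan is to exploit the fact that, after the substitution $X=\Gamma y$, the noise has been removed entirely: equation \eqref{e55.4} is, for each fixed $\omega\in\Omega$, a \emph{deterministic} absolutely continuous differential equation driven by the relay feedback \eqref{e55.5}. Since there is no longer a stochastic integral to control, the extinction argument of Theorem \ref{t2.1} yields genuine finite-time reaching along \emph{every} path, and hence controllability with probability one rather than merely with high probability. Throughout I fix $\omega$, set $z(t)=y(t)-y_T$, note that $y(0)=X(0)=x$ so that $z(0)=x-y_T$, and use that $\Gamma,\Gamma^{-1}$ are pathwise continuous on $[0,T]$, so $m(\omega):=\sup_{t\in[0,T]}(\|\Gamma(t)\|+\|\Gamma^{-1}(t)\|)<\infty$ a.s.

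First I would derive a scalar differential inequality for $|z(t)|$. Computing $\tfrac12\frac{d}{dt}|z|^2$ along \eqref{e55.4}--\eqref{e55.5} and dividing by $|z|$ where $z\neq0$, the drift $-\Gamma^{-1}A\Gamma\,y$ is estimated by $C|z|+C|y_T|$ with $C$ depending on $\|A\|$ and $m(\omega)$, while the pairing of $z$ with the control term gives
\[
\langle z,\Gamma^{-1}Bu\rangle=-\tilde\rho\,\langle(\Gamma^{-1}B)^\ast z,\mbox{sign}((\Gamma^{-1}B)^\ast z)\rangle=-\tilde\rho\,|(\Gamma^{-1}B)^\ast z|.
\]
Here hypothesis (ii) enters: from $BB^\ast\ge\gamma^2 I$ one gets $|B^\ast v|\ge\gamma|v|$, and with $|(\Gamma^\ast)^{-1}z|\ge m(\omega)^{-1}|z|$ this yields the coercivity bound $|(\Gamma^{-1}B)^\ast z|\ge\gamma'(\omega)|z|$ with $\gamma'(\omega):=\gamma\,m(\omega)^{-1}>0$. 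Hence, as long as $z(t)\neq0$,
\[
\frac{d}{dt}|z(t)|\le C\,|z(t)|+C\,|y_T|-\tilde\rho\,\gamma'(\omega).
\]

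Next I would run the extinction estimate pathwise. Multiplying by $e^{-Ct}$ and integrating gives, before $z$ vanishes,
\[
e^{-Ct}|z(t)|\le|x-y_T|+\bigl(C|y_T|-\tilde\rho\gamma'(\omega)\bigr)\frac{1-e^{-Ct}}{C},
\]
so that $\tau(\omega):=\inf\{t\ge0:|z(t)|=0\}\le T$ as soon as $\tilde\rho\,\gamma'(\omega)\ge C|y_T|+C|x-y_T|(1-e^{-CT})^{-1}$. This dictates the choice of the random gain: take $\tilde\rho=\tilde\rho(\omega)$ equal to the right-hand side divided by $\gamma'(\omega)$ (or larger). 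With this choice $\tau\le T$ for a.e. $\omega$; and by the monotonicity/uniqueness already exploited in Proposition \ref{p3.1} the solution of the relay inclusion remains at the target ($z\equiv0$) for $t\ge\tau$, the relay being able to hold it because $B$ is invertible by (ii). Thus $z(T)=0$, i.e. $y(T)=y_T$ and $X(T)=\Gamma(T)y_T=\Gamma(T)\Gamma^{-1}(T)X_T=X_T$, the claimed reaching with probability one.

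I expect the main obstacle to be neither the extinction nor the pathwise solvability (both are local copies of Theorem \ref{t2.1} and Proposition \ref{p3.1}), but the verification that the gain is admissible, i.e. $\tilde\rho\in\mathcal F_T\cap L^2(\Omega)$. Its $\mathcal F_T$-measurability is clear, since $\tilde\rho$ is built from $\Gamma|_{[0,T]}$ and $y_T=\Gamma^{-1}(T)X_T$. For the $L^2$ bound one must control $\gamma'(\omega)^{-1}=\gamma^{-1}m(\omega)$ together with $|y_T|$ and $|x-y_T|$. Here I would use that $\Gamma$ and $\Gamma^{-1}$ solve linear matrix equations with constant coefficients $\sigma_i$, whence $\E\sup_{[0,T]}\|\Gamma\|^p+\E\sup_{[0,T]}\|\Gamma^{-1}\|^p<\infty$ for every $p<\infty$; combining these moment bounds with $X_T\in L^2(\Omega)$ via H\"older's inequality delivers $\tilde\rho\in L^2(\Omega)$. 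This integrability step, rather than the control-theoretic core, is the delicate part of the argument.
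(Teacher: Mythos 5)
Your proposal is correct and follows essentially the same route as the paper: a pathwise energy/extinction estimate for $|y(t)-y_T|$ along \eqref{e55.4}--\eqref{e55.5}, using $BB^\ast\ge\gamma^2 I$ and the pathwise bounds on $\Gamma,\Gamma^{-1}$ to get the coercive term $-\tilde\rho\gamma'(\omega)$, and then choosing the random, $\mathcal F_T$-measurable gain $\tilde\rho$ large enough (depending on $|y_T|$, $|x-y_T|$ and $\sup_{[0,T]}\|\Gamma^{\pm1}\|$) to force $y(T)=y_T$. Your additional verification that $\tilde\rho\in L^2(\Omega)$ via moment bounds on $\Gamma^{\pm1}$ is a point the paper leaves implicit, and is a welcome supplement rather than a deviation.
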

\begin{proof}
If multiply equations \eqref{e55.4}-\eqref{e55.5} by $y(t)-y_T$ we get by \eqref{e1.4} that
\begin{equation}\label{e55.6}
\begin{array}{l}
\tfrac12\frac{d\ }{dt}|y(t)-y_T|^2+\tilde\rho\,\gamma\,C_1^\ast\, |y(t)-y_T|\le\\
C_2^\ast\,(|y(t)-y_T|+|y_T|)\,|y(t)-y_T|,
\end{array}
\end{equation}
a.e. $t\in(0,T)$, where
\[
\begin{array}{l}
(C_1^\ast)^{-1}=\sup\{\|(\Gamma^\ast(t))^{-1}\|_{\L(\R^n)};t\in[0,T]\},\\
C_2^\ast=\sup\{\|\Gamma^{-1}(t)A(t)\Gamma(t)\|_{\L(\R^n)};t\in[0,T]\}.
\end{array}\]
By \eqref{e55.6} it follows that if $\tilde\rho\,\gamma\,C_1^\ast>C_2^\ast|y_T|$ then the function
\[
t\to e^{-C_2^\ast\,t}|y(t)-y_T|+(\tilde\rho\,\gamma\,C_1^\ast-C_2^\ast|y_T|)(C_2^\ast)^{-1}(1-e^{-C_2^\ast\,t})
\]
is monotonically decreasing and so $y(T)-y_T=0$ if $\tilde\rho$ is taken in such a way that
\[
(\tilde\rho\,\gamma\, C_1^\ast-C_2^\ast|y_T|)(C_2^\ast)^{-1}(1-e^{-C_2^\ast\,T})\ge |x-y_T|.
\]
Then Theorem \ref{t5.1} follows for
\[
\tilde\rho=(\gamma\,C_1^\ast)^{-1}(C_2^\ast|y_T|+|x-y_T|).
\]
\end{proof}
It should be noted that since $\tilde\rho$ is not $\mathcal F_0$-measurable, the solution $y$ to system
\eqref{e55.4}-\eqref{e55.5} is not $(\mathcal F_t)_{t\ge0}$-adapted and so it is not equivalent with \eqref{e55.1}-\eqref{e55.4}.
This happens however if $A(t)$ and $B(t)$ commute with $\sigma_i$ because in this case $C_i^\ast$, $i=1,2$
are deterministic and so can be chose $\tilde\rho$.
In general it follows for system  \eqref{e55.1}-\eqref{e55.4} with $\tilde\rho=\rho$ and $y_T$ deterministic, a result similar to
that in Theorem \ref{t2.1}. Namely, by \eqref{e55.6} it follows as above (see \eqref{e3.9}) that
\[
\begin{split}
\E\big(e^{-C_2^\ast t}|y(t)-y_T|\big)+\rho\,\gamma\,\E\int_0^t e^{-C_2^\ast r} \P(\tau>r)\,dr\le\\ \E\big(|x-y_T|+(1-e^{-C_2^\ast T})|y_T|\big)
\end{split}
\]
and therefore $\P(\tau>T)\le 1-(\rho\eta)^{-1}(|x-y_T|+|y_T|)$ for some $\eta>0$.

\begin{Remark}\em
Clearly Theorem \ref{t5.1} extends to Lipschitzian mappings $A(t)\colon \R^n\to\R^n$.
\end{Remark}

Consider now system \eqref{e1.1} where $A\in\mathcal L(\R^n)$, $B\in\mathcal L(\R^m,\R^n)$,
$1\le m<n$ is time dependent and is satisfied the Kalman rank condition
\begin{equation}\label{e4.26a}
\mbox{rank}\|B,AB,\ldots,A^{n-1}B\|=n
\end{equation}
Assume also that $d=1$, $\sigma_1=\sigma$ and
\begin{equation}\label{e4.27a}
\sigma^k= a\,\sigma ,\quad \forall k\ge 2
\end{equation}
\begin{equation}\label{e4.28a}
\sigma(\R^d)\subset B(\R^m)
\end{equation}
for some $a\in \R$.\\
We have

\begin{Theorem}\label{t4.2}
Let $T>0$ and $x\in \R^n$ be arbitrary but fixed. Then under hypotheses
\eqref{e4.27a}-\eqref{e4.29a} there is an $(\mathcal F_t)_{t\ge 0}$-adapted controller $u\in L^2((0,T)\times\Omega;\R^n)$
which steers $x$ in origin, in time $T$, with probability one.
\end{Theorem}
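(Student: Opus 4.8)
The plan is to reduce \eqref{e55.1} to a \emph{pathwise} null-controllability problem for a fixed deterministic pair by the multiplicative substitution of Section 4, and then to transport a single deterministic control back through this substitution in a way that preserves adaptedness. Writing $\sigma_1=\sigma$ and letting $\Gamma$ solve $d\Gamma=\sigma\Gamma\,d\beta$, $\Gamma(0)=I$, the condition \eqref{e4.27a} forces every power $\sigma^k$ to be a scalar multiple of $\sigma$, so that $\Gamma(t)=\exp(\sigma(\beta(t)-\tfrac a2 t))=I+c(t)\,\sigma$ and likewise $\Gamma^{-1}(t)=I+\tilde c(t)\,\sigma$, with $\mathcal F_t$-measurable scalars $c,\tilde c$. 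Setting $X=\Gamma y$ and using, as I read \eqref{e4.29a}, the commutativity $A\sigma=\sigma A$ (hence $A\Gamma=\Gamma A$), It\^o's formula turns \eqref{e55.1} into the random ordinary differential equation
\[
\frac{dy}{dt}+A\,y=\Gamma^{-1}(t)\,B\,u,\qquad y(0)=x,
\]
exactly as in \eqref{e55.4} but with the \emph{constant} drift $A$, and $X(T)=0$ is equivalent to $y(T)=0$ since $\Gamma(T)$ is invertible. Pathwise existence and absolute continuity of $y$ follow from the argument of Proposition \ref{p3.1}.

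The structural heart of the proof is that the range condition \eqref{e4.28a} makes the control range time-invariant. Put $\mathcal R=B(\R^m)$. Since $\sigma(\R^n)\subset\mathcal R$, the subspace $\mathcal R$ is $\sigma$-invariant, hence invariant under every element of $\mathrm{span}\{I,\sigma\}$; in particular $\Gamma(t)\mathcal R=\Gamma^{-1}(t)\mathcal R=\mathcal R$ for all $t$ and $\omega$. Consequently $\Gamma^{-1}(t)B$ has range exactly $\mathcal R$, so at each time the control term $\Gamma^{-1}(t)Bu$ can realise an arbitrary element of $\mathcal R$, precisely as the term $Bv$ does in the deterministic system $\dot w+Aw=Bv$.

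I would then construct $u$ in two steps. First, the Kalman rank condition \eqref{e4.26a} says exactly that $\sum_{k=0}^{n-1}A^k\mathcal R=\R^n$, so the deterministic system $\dot w+Aw=Bv$, $w(0)=x$, is null-controllable on $[0,T]$; I fix a deterministic $v^\ast\in L^2(0,T;\R^m)$ and its trajectory $\bar w$ with $\bar w(T)=0$. Second, I lift $v^\ast$ by solving $B\,u(t)=\Gamma(t)B\,v^\ast(t)$ pointwise: the right-hand side lies in $\Gamma(t)\mathcal R=\mathcal R=B(\R^m)$, so a solution exists, and the minimal-norm choice $u(t)=B^{+}\Gamma(t)B\,v^\ast(t)$ is $\mathcal F_t$-measurable and, since $v^\ast$ is deterministic and $\Gamma(t)$ has finite second moments uniformly in $t\in[0,T]$, lies in $L^2((0,T)\times\Omega;\R^m)$. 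For this $u$ one has $\Gamma^{-1}(t)B\,u(t)=\Gamma^{-1}(t)\Gamma(t)B\,v^\ast(t)=B\,v^\ast(t)$, so $y$ satisfies the \emph{same} equation as $\bar w$; by uniqueness $y\equiv\bar w$, whence $y(T)=0$ and $X(T)=\Gamma(T)y(T)=0$ almost surely.

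The genuine difficulty, and the reason the relay/feedback scheme of Theorem \ref{t2.1} is unavailable here, is adaptedness together with the loss of the full-rank bound \eqref{e1.4}: the usual Gramian null-control of the transformed time-varying system depends on the Brownian path up to the terminal time $T$ and is therefore not $\mathcal F_t$-adapted. The crux that resolves this is the range-invariance $\Gamma(t)\mathcal R=\mathcal R$ of the second paragraph, which lets me transport a single \emph{fixed} deterministic control pathwise and read off adaptedness directly, while the commutativity in \eqref{e4.29a} is exactly what keeps the transformed drift equal to the deterministic $A$ and makes the matching $y\equiv\bar w$ exact. The remaining points — the moment bounds on $\Gamma$, the $L^2$ estimate on $u$, and the uniqueness used for $y\equiv\bar w$ — I expect to be routine.
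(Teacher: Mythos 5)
Your reduction hinges on reading ``hypothesis \eqref{e4.29a}'' as the commutativity $A\sigma=\sigma A$. That is not what the theorem assumes: \eqref{e4.29a} is the transformed random system appearing inside the proof, and the hypothesis list in the statement is evidently a typo for \eqref{e4.26a}--\eqref{e4.28a} (Kalman rank, $\sigma^k=a\sigma$, and $\sigma(\R^d)\subset B(\R^m)$). Commutativity is nowhere assumed, and it fails in the paper's own application in Section~5: there $\sigma=e_nb^{\mathsf T}$ has only its last row nonzero and $A$ is a companion matrix, so $A\sigma\ne\sigma A$ for $n\ge2$. Without commutativity the transformed drift is the random, time-varying matrix $\Gamma^{-1}(t)A\Gamma(t)$, not $A$; your control $u=B^{+}\Gamma Bv^\ast$ only arranges $\Gamma^{-1}Bu=Bv^\ast$ and does nothing about the drift mismatch, so the identification $y\equiv\bar w$ --- the step on which $y(T)=0$ rests --- collapses. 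This is a genuine gap, not a presentational one: you have proved the theorem under an additional structural hypothesis that excludes the paper's motivating example.

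The paper closes exactly this gap with the range condition \eqref{e4.28a}, which you invoke only to get $\Gamma(t)\mathcal R=\mathcal R$. It writes $\Gamma^{-1}A\Gamma y=Ay+\sigma D(t)y$ and $\Gamma^{-1}Bu=Bu+\sigma D_1(t)u$, observes that the perturbations take values in $\sigma(\R^d)\subset B(\R^m)$, and absorbs them into the control along the \emph{fixed} deterministic trajectory: $u=\tilde u+B^{-1}\bigl(\sigma D(t)\tilde y+\sigma D_1(t)\tilde u\bigr)$, which is adapted because $D,D_1$ depend only on $\beta(t)$. Your pathwise-transport skeleton, and your adaptedness and $L^2$ remarks, are compatible with this; to repair the argument you must add to $u$ a term cancelling $\bigl(\Gamma^{-1}(t)A\Gamma(t)-A\bigr)\bar w(t)$ and verify that this quantity lies in $B(\R^m)$ for every $t$. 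Note that the expansion of $\Gamma^{-1}A\Gamma-A$ contains the summand $A(\Gamma-I)$, whose range sits in $A\sigma(\R^d)$ rather than $\sigma(\R^d)$, so this membership needs a separate justification beyond the $\sigma$-invariance of $\mathcal R$ that you established --- a point on which the paper itself is terse.
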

\begin{proof}
By the transformation $X(t)=\Gamma(t)y(t)$ one reduces \eqref{e1.1} to the random system
\eqref{e55.4}, that is
\begin{equation}\label{e4.29a}
\begin{array}{l}
\begin{aligned}\frac{dy}{dt}(t)+\exp(-\beta(t)\,\sigma+\tfrac{t}2\,\sigma^2) A \exp(\beta(t)\,\sigma-\tfrac{t}2\,\sigma^2)=\\
\exp(-\beta(t)\,\sigma+\tfrac{t}2\,\sigma^2)B\,u(t)\end{aligned}\\
y(0)=x
\end{array}
\end{equation}
because $\Gamma(t)=\exp(\beta(t)\,\sigma-\tfrac{t}2\,\sigma^2)$.\\
Taking into account hypothesis \eqref{e4.27a}, we can rewrite \eqref{e4.29a} as
\begin{equation}\label{e4.30a}
\begin{array}{l}
\ds \frac{dy}{dt}+A\,y=Bu-\sigma\,D(t)\,y+\sigma\,D_1(t)\,u\\
y(0)=x.
\end{array}
\end{equation}
where $D_1(t)=\sum_{k=1}^\infty \frac{1}{k!}(\beta(t)-\frac{t\,a}2)^k\sigma^{k-1}$ and
\[
D(t)=\sum_{k=1}^\infty \frac{1}{k!}(-\beta(t)+\frac{t\,a}2)^k\sigma^{k-1}A\sum_{k=1}^\infty \frac{1}{k!}(\beta(t)-\frac{t\,a}2)^k\sigma^{k}
\]
Now by Kalman's condition \eqref{e4.26a} we know that there is a deterministic controller $\tilde u\in L^2(0,T;\R^m)$ such that
\begin{equation*}\label{}
\begin{array}{l}
\ds \frac{d\tilde y}{dt}+A\,\tilde y=B\tilde u,\quad t\in (0,T)\\
\tilde y(0)=x,\quad \tilde y(T)=0.
\end{array}
\end{equation*}
Since $B^{-1}\in \mathcal l(B(\R^n),\R^m)$, it follows by \eqref{e4.29a} that
\begin{equation*}\label{}
\begin{array}{l}
\begin{aligned}\ds \frac{d\tilde y}{dt}+\Gamma^{-1}A\,\Gamma\tilde y=\Gamma^{-1}B\tilde u(t)+\sigma\,D(t)\tilde y(t)+\sigma D_1(t)\tilde u\\
=\Gamma^{-1}B(\tilde u(t)+B^{-1}(\sigma D(t)\tilde y(t)+\sigma D_1(t)\tilde u))\\=\Gamma^{-1}B\tilde u(t)\end{aligned}\\
\tilde y(0)=x,\quad \tilde y(T)=0.
\end{array}
\end{equation*}
This means that $\big(\tilde y,u=\tilde u(t)+B^{-1}(\sigma D(t)\tilde y(t)+ D_1(t)\tilde u)\big)$
satisfies system \eqref{e4.30a} and $\tilde y(T)=0$. The controller $u$ is obviously $(\mathcal F_t)_{t\ge0}$-adapted
and so $\big(X(t)=\Gamma^{-1}(t)\tilde y(t),u(t)\big)$ satisfies system \eqref{e1.1} and $X(T)=0$ $\P$-a.s.
\end{proof}
\begin{Remark}\em
One might suspect that the controller $u$ steering $x$ in origin can be found in feedback form
but this problem is open.
\end{Remark}
%
\color{black}
\section{An example}
Consider the controlled $n$-order stochastic differential equation
\begin{equation}\label{e4.1}
\begin{array}{l}
X^{(n)}(t)+\dd\sum_{i=1}^{n}a_i\,X^{(i-1)}(t)\\\qquad\quad\ =\sigma_0(X,X',\ldots,X^{(n-1)})\,\dot W+u(t)
\\
\{X^{(k)}(0)\}_{k=0}^{n-1}=x\in\R^n
\end{array}
\end{equation}
where $a_i\in\R$, $\sigma_0(x_1,x_2,\ldots,x_n)=\sum_{i=1}^n b_i\,x_i$, $b_i\in\R$
and $W$ is a Wiener process in 1-D.

A typical example is the stochastic harmonic oscillator
$$\begin{array}{l}
\ddot X+a\dot X\,dt+bX\,dt=\sigma_0\,\dot W\\
X(0)=X_0,\ \dot X(0)=X_1.\end{array} $$
Equation \eqref{e4.1} is viewed as the stochastic differential system
\[
dX+A\,X\,dt=B\,u\,dt+\sigma(X)\,dW
\]
where $X=(X_i)_{i=1}^n$, $X_i=X^{(i-1)}$, $X(0)=x$,
\[
\sigma=\begin{pmatrix}0&0&\ldots&0\\ \vdots&\vdots&\vdots&\vdots\\0&0&\ldots&0\\b_1&b_2&\ldots&b_n\end{pmatrix}
\]
and
$$
A=
\begin{pmatrix}
0&1&0&\cdots&0\\0&0&1&\cdots&0\\
\vdots\\
0&0&0&\cdots&1\\
a_1&a_2&a_3&\cdots&a_{n}
\end{pmatrix},\ \ B=\begin{pmatrix}0\\0\\\vdots\\0\\1\end{pmatrix}.
$$
Clearly assumptions \eqref{e4.26a}-\eqref{e4.28a} hold and so by Theorem \ref{t4.2} it follows that, for each $x \in\R^n$,
 there is an $(\mathcal F_t)_{t\ge0}$-adapted feedback
controller $u^*(t)$ such that $X^{(i-1)}(T)=0$ for $i=1,2,\ldots,n-1$.

\section{Approximate controllability of stochastic heat equation}

Consider the stochastic equation
\begin{equation}\label{e5.1}
\begin{array}{l}
dX-\Delta\,X\,dt=\sum_{j=1}^d X\,e_j\,d\beta_j+\one_{\O_0}\,u\,dt,\\\hfill (t,\xi)\in(0,T)\times\O\\
X(0,\xi)=x(\xi),\  \xi\in\O\\
X(t,\xi)=0,\  \forall (t,\xi)\in(0,T)\times\bO.
\end{array}
\end{equation}
Here $d\ge 1$, $\O\subset \R^n$ is a bounded and open domain with smooth boundary
$\bO$, $\O_0$ is an open subset of $\O$ and $\{e_j\}_{j=1}^\infty$ is an orthonormal
base in $L^2(\O)$, given by $-\Delta\,e_j=\lambda_j\,e_j$ in $\O$, $e_j=0$ on $\bO$.
The controller $u\colon (0,\infty)\to L^2(\O)$ is an $(\mathcal F_t)_{t\ge0}$-adapted process.

We set $\tilde X^N=\sum_{i=1}^NX_i^N\,e_i$, $\tilde u^N=\sum_{i=1}^Nu_i^N\,e_i$
and approximate  \eqref{e5.1} by the $N$-D differential equation
\begin{equation}\label{e5.2}
\begin{array}{l}
dX^N-A_N\,X^N\,dt=\dd\sum_{j=1}^d \sigma_j(X^N)\,d\beta_j+B_N\,u^N\,dt,\vspace*{-3mm}\\ \\
X^N(0)=0
\end{array}
\end{equation}
where  
$$\begin{tabular}{l}
$X^N=\{X_i^N\}_{i=1}^N$, $u^N=\{u_i^N\}_{i=1}^N$,\vspace*{1mm}\\
$B_N=\left(\dd\int_{\O_0}e_i\,e_j\,d\xi\right)_{i,j=1}^N$,\\
 $A_N=\mbox{diag}(\lambda_i)_{i=1}^N$,
$\sigma_j(X^N)=\left(\dd\sum_{k=1}^n\langle e_k\,e_j,e_i\rangle_2\,X_k^N\right)_{i=1}^N$,
\end{tabular}$$
 \ \ $\langle \cdot,\cdot\rangle_2$ is the scalar product in $L^2(\O)$.\\
By the unique continuation property of eigenfunctions $e_j$, it follows that $\det B_N\ne0$,
which implies \eqref{e1.3}. Then, by Theorem \ref{t2.1}, for each $N\in\mathbb{N}$, equation \eqref{e5.1} is exactly controllable on $[0,T]$
in the sense of \mbox{\eqref{e2.1}--\eqref{e2.2}.} Taking into account that, 
\[
|x-\sum_{i=1}^N\langle x,e_i\rangle_2\,e_i|_2\to 0\  \mbox{ as }N\to\infty,
\]
\[
\E(\sup\{|\tilde X^N(t)-X^{\tilde u^N}(t)|_2^2,\;\;t\in[0,T]\})\to 0\ \mbox {as }N\to\infty,
\]
we get the following controllability result
\begin{Theorem}\label{t6.1}
	Let $x\in L^2(\O)$ and $T>0$ be arbitrary but fixed. Then for each $\eps>0$
	there is an $(\mathcal F_t)_{t\ge0}$-adapted controller
	$u_\eps\in L^2((0,T)\times\Omega;L^2(\O))$ such that
	\begin{equation}
	\P(|X^{u_\eps}(t)|_2\le\eps,\;\;\forall t \ge T)\ge 1-\eps.
	\end{equation}
\end{Theorem}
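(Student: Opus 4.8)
The plan is to deduce Theorem~\ref{t6.1} from the exact null controllability of the Galerkin truncation \eqref{e5.2} furnished by Theorem~\ref{t2.1}, and then to pass to the limit $N\to\infty$ by means of the two convergence statements recorded just before the theorem. Fix $\eps>0$. Since $\sigma_j(0)=0$, the origin belongs to $\sigma^{-1}(0)$, so Theorem~\ref{t2.1} applies to \eqref{e5.2} with target $y=0$ and initial datum $x^N=\sum_{i=1}^N\langle x,e_i\rangle_2 e_i$: for each $N$ and each $\rho>0$ it yields an $(\mathcal F_t)_{t\ge0}$-adapted relay controller $u^{N}=u^{N,\rho}$ whose closed--loop state $X^N$ satisfies, with $\tau_N=\inf\{t\ge0:\,|X^N(t)|=0\}$,
\[
\P(\tau_N>T)\le(\rho\eta)^{-1}(1-e^{-C^\ast T})^{-1}\,|x^N|,
\]
which is \eqref{e2.2} for $y=0$. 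The supermartingale (extinction) argument of Theorem~\ref{t2.1} moreover gives $X^N(t)=0$ for all $t\ge\tau_N$, so on $\{\tau_N\le T\}$ the truncated trajectory $\tilde X^N=\sum_{i=1}^N X_i^N e_i$ is identically $0$ for $t\ge T$; this is where $\sigma(0)=0$ is essential, as it makes the origin an absorbing equilibrium.

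I then take $u_\eps=\tilde u^{N}=\sum_{i=1}^N u_i^{N,\rho}e_i$, with $N$ and $\rho$ to be fixed last, and bound the complement of the event in the statement by
\[
\{\exists\,t\ge T:\,|X^{u_\eps}(t)|_2>\eps\}\subset\{\tau_N>T\}\cup\big(\{\tau_N\le T\}\cap\{\exists\,t\ge T:\,|X^{u_\eps}(t)|_2>\eps\}\big).
\]
The first event has probability at most $(\rho\eta)^{-1}(1-e^{-C^\ast T})^{-1}|x|_2$, because $|x^N|\le|x|_2$; I force it below $\eps/2$ by choosing $\rho$ large, independently of $N$. On $\{\tau_N\le T\}$ the truncation vanishes for $t\ge T$, whence $|X^{u_\eps}(t)|_2=|X^{u_\eps}(t)-\tilde X^N(t)|_2$ there, and by Chebyshev's inequality it suffices to arrange
\[
\eps^{-2}\,\E\Big(\sup_{t\ge T}|X^{u_\eps}(t)-\tilde X^N(t)|_2^2\Big)<\eps/2 .
\]

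The step I expect to be the main obstacle is precisely this last estimate, since the approximation bound quoted in the text controls $\E\sup_{t\in[0,T]}|\tilde X^N-X^{\tilde u^N}|_2^2$ only on the bounded horizon $[0,T]$, while here the supremum runs over $t\ge T$. To close the gap I would write $Z^N=X^{u_\eps}-\tilde X^N$ and run an It\^o energy identity for $|Z^N|_2^2$ on $[T,\infty)$: at $t=T$ the finite--horizon estimate already gives $\E|Z^N(T)|_2^2\to0$, and for $t\ge T$ the components of $Z^N$ with index $i>N$ carry the damping $\lbb_i\ge\lbb_{N+1}$ of $-\Delta$, while the diffusion coefficient is linear and vanishes with the state because $\sigma(0)=0$. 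The delicate point is that the multiplicative noise $\sum_j(\cdot)e_j$ couples high and low modes, so one must check that, for $N$ large, the spectral gap $\lbb_{N+1}$ dominates the bounded noise intensity $\sum_{j=1}^d\|e_j\|_{L^\infty}^2$; this (optionally reorganised through the pathwise substitution $X=\Gamma(t)Y$ of \eqref{e55.4}, legitimate here since the multiplications by $e_j$ commute) would yield $\E\sup_{t\ge0}|Z^N|_2^2\to0$ as $N\to\infty$ and upgrade the convergence to a global one. Taking $N$ large then makes the displayed quantity smaller than $\eps/2$, and the two bounds combine to give $\P(|X^{u_\eps}(t)|_2\le\eps,\ \forall t\ge T)\ge1-\eps$.
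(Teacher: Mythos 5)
Your proposal follows the same route as the paper: Galerkin truncation \eqref{e5.2}, application of Theorem~\ref{t2.1} with target $y=0$ (legitimate since $\sigma_j(0)=0$), extinction after the stopping time, and passage to the limit via the two convergence statements displayed before the theorem. The paper's own argument is no more than this sketch, so on the common ground the two are identical. Where you go beyond the paper is in flagging the mismatch between the approximation estimate, which controls $\E\sup_{t\in[0,T]}|\tilde X^N-X^{\tilde u^N}|_2^2$, and the conclusion, which requires smallness of $|X^{u_\eps}(t)|_2$ for \emph{all} $t\ge T$; the paper is silent on this, and you are right that it is the real content of the limit passage.

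Two caveats on your way of closing that gap. First, for $t\ge T$ the controller is off ($u_\eps\in L^2((0,T)\times\Omega;L^2(\O))$), so the full solution evolves freely under $dX=\Delta X\,dt+\sum_j Xe_j\,d\beta_j$; the It\^o energy identity gives $d\,\E|X|_2^2=\bigl(-2\E|\nabla X|_2^2+\sum_j\E|Xe_j|_2^2\bigr)dt$, and the damping available to the \emph{low} modes is only $\lambda_1$, not $\lambda_{N+1}$ --- the noise couples all modes, so the error $Z^N$ does not live in the high-frequency range. A dissipativity condition such as $\sum_{j=1}^d\|e_j\|_\infty^2\le 2\lambda_1$ (or a time-uniform Gronwall bound on a finite horizon $[T,T']$ if the statement is read with $t\in[T,T']$) is genuinely needed and is not a consequence of the hypotheses; your remark that ``the spectral gap $\lambda_{N+1}$ dominates'' addresses only the modes above $N$. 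Second, your order of quantifiers (``$\rho$ large, independently of $N$, then $N$ large'') presumes that the constant $\eta$ in \eqref{e2.2} --- which for system \eqref{e5.2} is the coercivity constant $\gamma_N$ of $B_NB_N^*$ --- is bounded below uniformly in $N$. Unique continuation gives $\det B_N\ne0$ for each $N$ but says nothing about uniformity, and if $\gamma_N\to0$ the choice of $\rho$ must depend on $N$, which then feeds back into the approximation estimate through the controller. Both points are gaps in the paper as much as in your proposal, but they should be acknowledged as hypotheses or proved rather than asserted.
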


\begin{Remark}\rm In 1-D a similar result was established by a different method in \cite{6a}. It turns out (see \cite{3a}) that, under the above assumptions, there is an $(\mathcal{F}_t)_{t\ge0}$-adapted controller $u$ which steers $x$ into a linear subspace of $L^2(\Omega;\mathcal{O})$.\break However, it remains an open problem the exact null controllability. (For other partial results to exact null controllability, see \cite{6aa}, \cite{WYYY}.) 
	\end{Remark}

\section{Conclusion}
Under hypotheses (i)--(iii), the stochastic differential equation \eqref{e1.1} is  exactly controllable to any $y\in\sigma^{-1}(0)$ by a stochastic feedback controller
$u$ which is explicitly designed. In the special case of stochastic equations with
linear multiplicative noise the controllability set $\{y=X^u(T)\}$ is all $\R^n$. Moreover if the pair $(A,B)$ satisfies
the Kalman rank condition and $\sigma(\R^d)\subset B(\R^m)$ then the system \eqref{e1.1} is 
exactly null controllable.
As application the approximate controllability of stochastic heat equation with multiplicative Wiener noise was given.

\section*{Acknowledgments}
The authors thank the Mathematics Department of University of Trento for the financial support. V. Barbu
was supported by the grant of Romanian Ministry of Research and Innovation
CNCS-UEFISCDI, DN-III-D4-DCE-2016-0011

\section*{References}

\end{document}